\newcommand{\Fc}{{\mathcal F}}
\newcommand{\Gc}{{\mathcal G}}
\newcommand{\A}{{\mathbf A}}
\newcommand{\Ab}{{\mathbb A}}
\renewcommand{\a}{\mathfrak{a}}
\newcommand{\m}{\mathfrak{m}}
\newcommand{\p}{\mathfrak{p}}
\newcommand{\OO}{{\mathcal O}}
\newcommand{\codim}{{\rm codim}}
\newcommand{\Spec}{{\rm Spec}}
\theoremstyle{plain}
\newtheorem{theor}{Theorem}
\newtheorem{prop}[theor]{Proposition}
\newtheorem{corol}[theor]{Corollary}
\newtheorem{lemma}[theor]{Lemma}
\theoremstyle{remark}
\newtheorem{rmk}[theor]{Remark}
\theoremstyle{definition}
\newtheorem{defin-prop}[theor]{Definition-Proposition}
\title{Intersections of adelic groups on a surface}
\author{Roman Budylin and Sergey Gorchinskiy\\ \\
\small{Steklov Mathematical Institute, Moscow, Russia}\\
\small{e-mail: {\tt budylin@mi.ras.ru}}\\
\small{e-mail: {\tt gorchins@mi.ras.ru}}}
\date{}
\begin{document}
\maketitle

\begin{abstract}
We solve a technical problem related to adeles on an algebraic surface. Given a finite set of natural numbers up to two, one associates an adelic group. We show that this operation commutes with taking intersections if the surface is defined over an uncountable field and we provide a counterexample otherwise.

\end{abstract}

\section{Introduction}

Adeles for surfaces were introduced by A.\,N.\,Parshin~\cite{Par} as a generalization of classical adeles for global fields, in particular, fields of rational functions on curves (over finite fields). In this generalization, points on curves are replaced by flags, that is, chains of embedded irreducible subvarieties. One defines several adelic groups according to codimensions of members in flags. More precisely, given a surface $X$ and a subset \mbox{$I\subset \{0,1,2\}$}, one associates an adelic group $\A_X(I,\OO_X)$ (this is a particular case of an adelic group $\A_X(I,\Fc)$ defined for an arbitrary quasicoherent sheaf $\Fc$ on $X$). If $I\subset J$, then there is a canonical map $\varphi_{IJ}\colon\A_X(I,\OO_X)\to \A_X(J,\OO_X)$, which is injective if $X$ is regular, Proposition~\ref{prop:inj}. Thus all adelic groups are subgroups in the biggest one, $\A_X(\{0,1,2\},\OO_X)$, and a natural question is whether \mbox{$\A_X(I\cap J,\OO_X)$} is equal to $\A_X(I,\OO_X)\cap \A_X(J,\OO_X)$. Note that the analogous question for rational, also called uncomplete, adeles is trivial,~\cite[\S2]{Par}.

When $X$ is projective, a positive answer to this question was obtained in~\cite[Prop. 4.3]{FP} by means of global methods. This was used later in~\cite{PO} in order to prove Riemann--Roch theorem for surfaces using adeles. In this paper we give a positive answer to the above question when the ground field is uncountable, Theorem~\ref{theor-main}, and provide a counterexample in the affine case when the ground field is countable, Theorem~\ref{theor:contr}. Also, we give a positive answer to the above question in the projective case for an arbitrary locally free sheaf of finite rank. The proof is essentially the same as the proof of~\cite[Prop. 4.3]{FP} for the structure sheaf, we include it here with a kind permission of A.\,N.\,Parshin.

Generalization to a higher-dimensional case remains open. Actually, it is not even known whether the map $\varphi_{IJ}$ is injective (even for regular varieties). Notice that analogues of all statements of the paper for restricted adeles on projective Cohen--Macauley varieties of arbitrary dimension hold true by~\cite[Theor. 4,~5]{Osi}.

We are very grateful to D.\,Osipov for many useful discussions and to A.\,N.\,Parshin for encouragement and important comments. We highly appreciate excellent working conditions in the Hausdorff Research Institute for Mathematics (HIM), Bonn, where the paper was initiated. This work was partially supported by the grants RFBR 11-01-00145, NSh-5139.2012.1, AG Laboratory NRU HSE, RF government grant, ag. 11.G34.31.0023. The work of S.\,G. was partially supported by RFBR grants 12-01-31506, 12-01-3302 and by Dmitry Zimin's Dynasty Foundation.

\section{Statement of the main result}\label{sec:prel}

First we recall some general facts about adeles on Noetherian schemes. However, as our main result concerns surfaces only, the reader may easily restrict himself by this from the very beginning. Notice that the definition of adeles on a surface has a much more explicit version,~\cite{Par},~\cite{Osi2},~\cite{Mor}.

Let $X$ be a Noetherian scheme and $\Fc$ be a quasicoherent sheaf on~$X$. By $S(X)_p$, $p\geqslant 0$, denote the set of all non-degenerate length $p$ flags on $X$, that is, sequences of schematic points $(\eta_0,\ldots,\eta_p)$ such that $\eta_{i+1}\in \bar\eta_i$ and $\eta_{i+1}\ne\eta_i$, where $\bar\eta$ denotes the closure of a point~$\eta$ in $X$. Given a subset $S\subset S(X)_p$, by $\A_X(S,\Fc)$ denote the corresponding group of adeles,~\cite{Bei},~\cite{Hub}. The functor $\Fc\mapsto \A_X(S,\Fc)$ is exact and commutes with filtered colimits. There is a canonical embedding,~\cite[Prop. 2.1.4]{Hub},
$$
\mbox{$\A_X(S,\Fc)\hookrightarrow \prod\limits_{\Delta\in S}\A_X(\{\Delta\},\Fc)$}\,.
$$
Thus an adele $a\in \A_X(S,\Fc)$ is uniquely determined by its local components \mbox{$a_{\Delta}\in \A_X(\{\Delta\},\Fc)$}. By definition, put $\A_X(\varnothing,\Fc):=\Fc(X)=H^0(X,\Fc)$.
We will use the following facts.

\begin{lemma}\label{prop-finitemorph}
Let $f\colon X\to Y$ be a finite morphism between Noetherian schemes. Then for any subset $T\subset S(Y)_p$, $p\geqslant 0$, there is a canonical isomorphism
$$
\A_Y(T,f_*\Fc)\stackrel{\sim}\longrightarrow\A_X(f^{-1}T,\Fc)\,,
$$
where $f^{-1}T\subset S(X)_p$ is defined in a natural way.
\end{lemma}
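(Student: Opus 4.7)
The plan is to induct on the flag length $p$, using the inductive definition of Beilinson--Huber adeles from \cite{Bei}, \cite{Hub}. The group $\A_X(S,\Fc)$ is built out of two basic operations -- taking stalks at generic points of integral closed subschemes and completing along the corresponding maximal ideals -- both of which behave well under finite pushforward, and this is essentially the whole content of the lemma. Via the canonical embedding into the product over individual flags, it suffices to exhibit the isomorphism for $T = \{\Delta\}$ and to check that the construction is functorial with respect to restriction to subsets of $T$; the global isomorphism then assembles from the local ones.

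For the base case $p=0$, fix $\eta \in Y$ and choose an affine neighborhood $U = \Spec A$ of $\eta$ with $f^{-1}(U) = \Spec B$; let $\Fc|_{f^{-1}(U)}$ correspond to the $B$-module $M$, and let $\p \subset A$ be the prime of $\eta$. Then $(f_*\Fc)_\eta = M_\p = M \otimes_B B_\p$, and the semi-local ring $B_\p = B \otimes_A A_\p$ has maximal ideals exactly $\q_i B_\p$, where $\q_1,\ldots,\q_k \subset B$ are the finitely many primes over $\p$, corresponding to the points $\xi_i \in f^{-1}(\eta)$. Because $B_\p/\p B_\p$ is Artinian (as $B$ is finite over $A$), the $\p$-adic and Jacobson-radical topologies on $B_\p$ coincide, so
\[
\A_Y(\{\eta\}, f_*\Fc) \;=\; \widehat{M_\p} \;=\; \prod_{i=1}^{k}\widehat{\Fc_{\xi_i}} \;=\; \A_X\bigl(f^{-1}\{\eta\}, \Fc\bigr),
\]
and the general case $T \subset S(Y)_0$ follows by taking products.

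For the inductive step, given $\Delta = (\eta_0,\eta_1,\ldots,\eta_p) \in S(Y)_p$, one uses the identification of $\A_Y(\{\Delta\}, f_*\Fc)$ with the $\m_{\eta_0}$-adic completion of the adeles of the truncated flag $(\eta_1,\ldots,\eta_p)$ on the closed subscheme $\bar{\eta}_0$. For each $\xi_0 \in f^{-1}(\eta_0)$ the restriction $\bar{\xi}_0 \to \bar{\eta}_0$ is again a finite morphism, and the preimage of the truncated flag under this restriction parametrizes exactly the flags of $X$ over $\Delta$ beginning with $\xi_0$. Applying the induction hypothesis to each such restriction, and then applying the base-case computation to complete at $\m_{\eta_0}$, yields the required isomorphism for $\{\Delta\}$; assembling over all $\Delta \in T$ gives the lemma.

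The main technical point will be checking that the isomorphism is strictly compatible with Beilinson's assembly procedure (the inverse limit over $n$ of the pro-systems $\Fc/\m^n\Fc$), i.e.\ that finite $f_*$ commutes, on the nose, with the ind-pro construction defining $\A$. This reduces entirely to the elementary commutative-algebra computation used in the base case, combined with the stability of ``finite morphism'' under passage to closed subschemes.
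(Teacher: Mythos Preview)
The paper does not actually prove this lemma; it only remarks that the proof is ``completely analogous to the proof of~\cite[Prop.~3.1.7]{Yek}.'' Your induction on the flag length $p$, using that completion of a semi-local finite $A_\p$-algebra at $\p$ splits as the product of completions at the primes over $\p$, is precisely the argument in Yekutieli's proof, so your approach matches what the paper invokes.

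One small caution: your opening reduction (``it suffices to exhibit the isomorphism for $T=\{\Delta\}$ and then assemble'') is not by itself enough, since matching the local factors does not automatically match the adelic subgroups inside the products; the induction really has to be run on the full recursive definition $\A(T,\Gc)=\prod_\eta\varprojlim_n\A({}_\eta T,\ldots)$ for arbitrary $T$, not flag by flag. You clearly recognize this in your final paragraph, so just make sure the written-up version carries the induction hypothesis for general $T$ rather than deducing it a posteriori from the singleton case.
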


The proof of Lemma~\ref{prop-finitemorph} is completely analogous to the proof of~\cite[Prop. 3.1.7]{Yek}.

\begin{lemma}\label{lemma-Denis}
If $X$ is affine, then for any subset $S\subset S(X)_p$, $p\geqslant 0$, there is a canonical isomorphism

$$
\A_X(S,\OO_X)\otimes_{\OO_X(X)}\Fc(X)\stackrel{\sim}\longrightarrow \A_X(S,\Fc)\,.
$$

\end{lemma}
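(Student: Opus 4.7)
The plan is to construct the canonical map by functoriality and then verify it is an isomorphism by reducing via a free presentation to the case of a free module. Set $R := \OO_X(X)$ and $M := \Fc(X)$; since $X$ is affine, a quasicoherent sheaf is the same datum as its module of global sections. Every $m \in M$ defines an $\OO_X$-module morphism $\OO_X \to \Fc$ (namely $1 \mapsto m$), which by functoriality of $\Fc' \mapsto \A_X(S, \Fc')$ induces a homomorphism $\A_X(S, \OO_X) \to \A_X(S, \Fc)$. The assignment $(a, m) \mapsto a \cdot m$ so obtained is $R$-bilinear, hence factors through the sought map
$$
\A_X(S, \OO_X) \otimes_R M \longrightarrow \A_X(S, \Fc).
$$

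I would then regard both sides as functors of $M$ (equivalently, of $\Fc$). The left-hand side $M \mapsto \A_X(S, \OO_X) \otimes_R M$ is right exact and commutes with arbitrary direct sums. The right-hand side $\Fc \mapsto \A_X(S, \Fc)$ is exact and commutes with filtered colimits by the properties recalled just above the lemma; being additive and exact it also commutes with finite direct sums, and hence with arbitrary direct sums, which are filtered colimits of their finite subsums. The canonical map is thus a natural transformation between two right exact functors that commute with arbitrary direct sums.

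For $M = R$ the canonical map is tautologically the identity of $\A_X(S, \OO_X)$, so it is an isomorphism for every free $R$-module $R^{\oplus I}$. For an arbitrary $M$, I would choose a free presentation $R^{\oplus I} \to R^{\oplus J} \to M \to 0$; applying both functors yields a commutative diagram with exact rows, and the isomorphism on the two free terms forces the map to be an isomorphism on $M$ by the five lemma. I do not expect any serious obstacle: the only point needing care is the verification that $(a,m) \mapsto a \cdot m$ is genuinely $R$-bilinear, which is immediate from the $\OO_X$-linear functoriality of $\A_X(S,-)$; after that the argument is a formal reduction using exactly the exactness and colimit-preservation properties supplied in the preamble.
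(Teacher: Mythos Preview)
Your proof is correct and follows essentially the same strategy as the paper's: construct the canonical map by functoriality, then reduce to the trivial case $\Fc=\OO_X$ using exactness and commutation with filtered colimits of $\A_X(S,-)$. The only cosmetic difference is that the paper first passes via filtered colimits to coherent $\Fc$ and then uses a \emph{finite} free presentation, whereas you bypass this step by observing that the right-hand side commutes with arbitrary direct sums (finite sums by additivity, then filtered colimits), allowing an infinite free presentation directly; both routes use the same ingredients.
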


The proof of Lemma~\ref{lemma-Denis} is similar to the proof of~\cite[Prop. 1.5]{Osi}. Namely, since the functor $\Fc\mapsto \A_X(S,\Fc)$ commutes with filtered colimits, one may assume that $\Fc$ is coherent. Further, using the exactness of this functor and finite presentation of coherent sheaves, one reduces the problem to the obvious case $\Fc=\OO_X$.

\medskip

In what follows we assume that $X$ is irreducible. Let $d$ be the dimension of $X$. Let~$I$ be a subset in $\{0,1,\ldots,d\}$, that is, $I=\{i_0,\ldots,i_p\}$ for a strictly increasing sequence of integers $0\leqslant i_0<\ldots<i_p\leqslant d$, $0\leqslant p\leqslant d$. Put
$$
\A_X(I,\Fc):=\A_X(S(I),\Fc)\,,
$$
where $S(I)$ is the set of all flags $(\eta_0,\ldots,\eta_p)$ with $\codim_X(\bar\eta_j)=i_j$, $0\leqslant j\leqslant p$. In particular, $\A_X(\{0\},\Fc)$ is the fiber of $\Fc$ at the generic point of $X$. By $\underline{\A}_X(I,\Fc)$, $I\ne \varnothing$, denote a flabby sheaf on $X$ defined by the formula
$$
\underline{\A}_X(I,\Fc)(U):=\A_U(I,\Fc|_U)\,,
$$
where $U$ is an open subset in $X$. By definition, put $\underline{\A}_X(\varnothing,\Fc):=\Fc$.
Lemma~\ref{lemma-Denis} implies immediately the following fact.

\begin{corol}\label{cor-Denis}
For any subset $I\subset \{0,\ldots,d\}$, there is a canonical isomorphism of sheaves
$$
\underline{\A}_X(I,\OO_X)\otimes_{\OO_X}\Fc\stackrel{\sim}\longrightarrow \underline{\A}_X(I,\Fc)\,.
$$
\end{corol}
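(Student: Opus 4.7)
The plan is to reduce the sheaf statement to the global affine statement of Lemma~\ref{lemma-Denis} by passing to stalks at an arbitrary point.

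First I would construct the canonical morphism. For each open $U\subset X$, the $\OO_X(U)$-module structure on $\A_U(I,\Fc|_U)$ gives a bilinear pairing
$$
\A_U(I,\OO_U)\times \Fc(U)\to \A_U(I,\Fc|_U)\,,
$$
natural in $U$. This assembles into a morphism of presheaves of $\OO_X$-modules, and after sheafification yields the required morphism
$$
\underline{\A}_X(I,\OO_X)\otimes_{\OO_X}\Fc\longrightarrow \underline{\A}_X(I,\Fc)\,.
$$

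Next I would verify this is an isomorphism on stalks. Fix $x\in X$ and let $U$ run over affine open neighborhoods of $x$, which form a filtered system. Since tensor products commute with filtered colimits in each variable, and since $\OO_{X,x}=\varinjlim_U\OO_X(U)$, one has
$$
\bigl(\underline{\A}_X(I,\OO_X)\otimes_{\OO_X}\Fc\bigr)_x\;\cong\;\underline{\A}_X(I,\OO_X)_x\otimes_{\OO_{X,x}}\Fc_x\;\cong\;\varinjlim_{U\ni x}\bigl(\A_U(I,\OO_U)\otimes_{\OO_X(U)}\Fc(U)\bigr)\,.
$$
By Lemma~\ref{lemma-Denis} applied to each affine $U$, each term is canonically isomorphic to $\A_U(I,\Fc|_U)$, and taking the colimit yields $\underline{\A}_X(I,\Fc)_x$. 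Naturality of the constructions ensures that this chain of isomorphisms is induced by the morphism built in the first step.

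There is no real obstacle; the only thing to be slightly careful with is the compatibility of base rings in the colimit, which is handled by the standard fact that filtered colimits of tensor products equal the tensor product of the colimits. The case $I=\varnothing$ is trivial since then both sides equal $\Fc$.
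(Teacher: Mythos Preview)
Your argument is correct and is exactly the natural elaboration of the paper's one-line justification, which simply says that Lemma~\ref{lemma-Denis} implies the corollary immediately. You have supplied the standard details: construct the map from the presheaf level and then verify it is an isomorphism by reducing to affine opens via stalks, where Lemma~\ref{lemma-Denis} applies directly.
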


Given subsets $I\subset J\subset \{0,\ldots,d\}$, there is a canonical map induced by boundary maps on adelic groups,~\cite[Sec. 2.2]{Hub},
$$
\varphi_{IJ}\colon\A_X(I,\Fc)\to \A_X(J,\Fc)\,.
$$

\begin{prop}\label{prop:inj}
If $X$ is a regular irreducible surface over a field and $\Fc$ is flat, then for all subsets \mbox{$I\subset J\subset \{0,1,2\}$}, the map $\varphi_{IJ}$ is injective.
\end{prop}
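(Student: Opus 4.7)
By Corollary~\ref{cor-Denis}, one has $\underline{\A}_X(I,\Fc)\simeq \underline{\A}_X(I,\OO_X)\otimes_{\OO_X}\Fc$ (and similarly for $J$), so the sheaf map $\underline{\A}_X(I,\Fc)\to\underline{\A}_X(J,\Fc)$ underlying $\varphi_{IJ}$ is obtained by tensoring the structure-sheaf map $\underline{\A}_X(I,\OO_X)\to\underline{\A}_X(J,\OO_X)$ with the flat sheaf $\Fc$. Since tensoring with a flat $\OO_X$-module preserves injectivity and $H^0(X,-)$ is left exact, it suffices to prove that the structure-sheaf map is injective. Factoring the inclusion $I\subset J$ into a chain of one-element additions, we further reduce to the case $J=I\cup\{k\}$ with $k\notin I$.

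Injectivity of a sheaf map can be checked stalk-locally, so one may restrict to arbitrarily small affine opens of $X$. Using the canonical embedding
$$
\A_X(S,\OO_X)\hookrightarrow\prod_{\Delta\in S}\A_X(\{\Delta\},\OO_X)
$$
recalled in the beginning of this section, the problem becomes flag-by-flag: for every flag $\Delta\in S(I)$, any element of $\A_X(\{\Delta\},\OO_X)$ whose image in $\A_X(\{\Delta'\},\OO_X)$ vanishes for all refinements $\Delta'\in S(J)$ obtained by inserting a codimension-$k$ point into $\Delta$ must itself be zero.

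The last step is a local computation. On a regular irreducible surface, each single-flag ring $\A_X(\{\Delta\},\OO_X)$ is built by iterated localization and $\m$-adic completion starting from a regular local ring of dimension at most two. The required injections then follow from two classical facts: Krull's intersection theorem, which shows that a Noetherian local ring embeds into its $\m$-adic completion; and the fact that a regular (hence normal) Noetherian domain embeds into the product of its localizations at height-one primes, and thus also into the product of completions at those primes, the latter being faithfully flat over the former.

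The main obstacle will be the case where the inserted index is $k=1$, in particular $\{0,2\}\subset\{0,1,2\}$ and $\{2\}\subset\{1,2\}$: here a codimension-one point must be placed between a closed point $x$ and something above it, and one has to identify the height-one primes of the appropriate localization of $\widehat{\OO}_{X,x}$ with the codimension-one points of $X$ specializing to $x$, so as to exhibit the refinement map as a direct-product injection. Regularity (hence normality) of $\widehat{\OO}_{X,x}$ is what makes this identification, and the resulting injection, go through.
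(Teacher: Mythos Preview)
Your reduction from $\Fc$ to $\OO_X$ via Corollary~\ref{cor-Denis}, flatness, and left exactness of $H^0$ is exactly the paper's argument. For the $\OO_X$ case the paper simply appeals to the explicit description of surface adeles in~\cite{Par},~\cite{Osi2},~\cite{Mor}; you instead sketch a direct verification, which is fine and follows the same underlying idea.

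One point in your last paragraph is not right, however. The height-one primes of $\widehat{\OO}_{X,x}$ are \emph{not} in bijection with the codimension-one points of $X$ specializing to $x$: a curve through $x$ can split into several analytic branches, and, more seriously, $\widehat{\OO}_{X,x}$ has many ``transcendental'' height-one primes contracting to $(0)$ in $\OO_{X,x}$ (e.g.\ $(v-f(u))\subset k[[u,v]]$ for $f\in k[[u]]$ transcendental over $k(u)$). So the identification you invoke does not exist, and normality alone will not produce it.

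Fortunately you do not need it. For the inclusion $\{2\}\subset\{1,2\}$, regularity of $\OO_{X,x}$ makes $\widehat{\OO}_{X,x}$ a regular local ring, hence an integral domain; it therefore injects into its localization at \emph{any single} height-one prime, and hence (by Krull) into the completion there. Since every closed point of a surface lies on at least one curve, one such prime always exists, and that already gives the required injectivity. For $\{0,2\}\subset\{0,1,2\}$ the local factor at $x$ is the fraction field of $\widehat{\OO}_{X,x}$, and a field injects into any nonzero ring it maps to. So replace the ``identification'' argument by this simpler observation and the proof goes through.
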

\begin{proof}
The explicit definition of adeles on a regular surface,~\cite[Sec. 2]{Par},~\cite[Sec. 3.3]{Osi2},~\cite[Sec. 8.5]{Mor}, implies the proposition for $\Fc=\OO_X$. Furthermore, the morphism of sheaves
$$
\underline{\A}_X(I,\OO_X)\to\underline{\A}_X(J,\OO_X)
$$
induced by $\varphi_{IJ}$ is injective as well. Combining flatness of $\Fc$, Corollary~\ref{cor-Denis}, and left exactness of $H^0$, we obtain the required statement.
\end{proof}

\begin{rmk}
It seems that Proposition~\ref{prop:inj} is also true when $X$ is a normal excellent two-dimensional irreducible Noetherian scheme.
\end{rmk}

Thus under the conditions of Proposition~\ref{prop:inj} all groups $\A_X(I,\Fc)$ are canonically embedded into the group $\A_X(\{0,1,2\},\Fc)$. The main question that we address in the paper is whether \mbox{$\A_X(I\cap J,\Fc)$} is equal to $\A_X(I,\Fc)\cap \A_X(J,\Fc)$ for arbitrary subsets $I$ and $J$ in~$\{0,1,2\}$.

\medskip

Recall that a flat coherent sheaf is the same as a locally free sheaf of finite rank, \cite[Prop.(3.G)]{M}. Our main result is as follows.

\begin{theor}\label{theor-main}
Let $X$ be a regular irreducible surface over a field $k$, $\Fc$ be a flat quasicoherent sheaf on $X$, and let $I$ and $J$ be two subsets in $\{0,1,2\}$. Suppose that one of the following conditions is satisfied:
\begin{itemize}
\item[(i)]
$I\cap J=I\smallsetminus\{0\}$,
\item[(ii)]
the field $k$ is uncountable,
\item[(iii)]
$\Fc$ is locally free of finite rank and $X$ is projective.
\end{itemize}
Then there is an equality
$$
\A_X(I\cap J,\Fc)=\A_X(I,\Fc)\cap \A_X(J,\Fc)\,,
$$
where the intersection is taken in $\A_X(\{0,1,2\},\Fc)$.
\end{theor}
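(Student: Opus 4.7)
The plan is to dispose of easy cases, reduce to $\Fc=\OO_X$, then treat the essential cases separately. If $I\subseteq J$ or $J\subseteq I$, the identity is trivial. Among incomparable pairs, up to swapping $I$ and $J$, condition~(i) handles those with $0\in I$, $0\notin J$, $I\setminus\{0\}\subseteq J$ (for example $(\{0\},\{1\})$, $(\{0,1\},\{1,2\})$, $(\{0,2\},\{1,2\})$). All remaining pairs --- such as $(\{1\},\{2\})$, $(\{1\},\{0,2\})$, $(\{2\},\{0,1\})$ --- reduce to the critical pair $(I,J)=(\{0,1\},\{0,2\})$ with $I\cap J=\{0\}$ by intersecting with auxiliary adelic groups and invoking already-handled (i)-cases. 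For instance, from the (i)-case applied to $(\{0,1\},\{1,2\})$ and $(\{0,2\},\{1,2\})$ one rewrites $\A_X(\{1\},\Fc)\cap\A_X(\{2\},\Fc)$ as $\A_X(\{0,1\},\Fc)\cap\A_X(\{0,2\},\Fc)\cap\A_X(\{1,2\},\Fc)$; combining the critical-case equality with the (i)-case for $(\{0\},\{1,2\})$ then yields $\Fc(X)$.

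To reduce to $\Fc=\OO_X$, Corollary~\ref{cor-Denis} gives $\underline{\A}_X(I,\Fc)\cong\underline{\A}_X(I,\OO_X)\otimes_{\OO_X}\Fc$, and flatness of $\Fc$ preserves exactness of any Mayer--Vietoris-type sequence in $\OO_X$-coefficients; left exactness of $H^0$ transfers the intersection statement from $\OO_X$ to $\Fc$. For condition~(i), I would then argue directly using the explicit description of adeles on a regular surface: since $0\notin J$, components of $\A_X(J,\Fc)$ lie in integral (non-generic) completions at codimension $\geqslant 1$ points, while elements of $\A_X(I,\Fc)$ differ from those of $\A_X(I\setminus\{0\},\Fc)$ only by allowing generic denominators. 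Flag-component comparison forces these denominators to vanish, yielding the equality $\A_X(I\cap J,\Fc)=\A_X(I,\Fc)\cap\A_X(J,\Fc)$.

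The critical pair $(\{0,1\},\{0,2\})$ is handled under~(ii) or~(iii). Under~(iii), I would follow Parshin's argument from~\cite{FP}: a generic projection yields a finite morphism $f\colon X\to\P^2$, Lemma~\ref{prop-finitemorph} reduces intersection questions on $X$ to analogous ones on $\P^2$ with $f_*\Fc$, and finite-dimensionality of $H^0(X,\Fc(D))$ for suitable ample $D$ (via Serre vanishing) pins any adele in the intersection down to an element of the generic fiber. Under~(ii), I would work on an affine chart trivializing $\Fc$ and exploit the uncountability of $k$ pigeonhole-style: the space of rational functions on $X$ with pole orders bounded by a fixed divisor is a finite-dimensional $k$-subspace of $k(X)$, and the compatibility of $\A_X(\{0,1\},\Fc)$ with $\A_X(\{0,2\},\Fc)$ at uncountably many closed points forces a single $k(X)$-element to represent the adele.

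The main obstacle is condition~(ii): locality is not directly available since the constraint $I\cap J=\{0\}$ is global, and cohomological finiteness is absent for non-projective $X$. Converting ``uncountable $k$'' into a usable bound on the freedom of an adele requires a delicate cardinality argument, and the necessity of this hypothesis is confirmed by the counterexample over countable ground fields promised in Theorem~\ref{theor:contr}.
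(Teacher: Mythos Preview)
Your reduction strategy and treatment of case~(i) are essentially the paper's, though you target the critical pair $(\{0,1\},\{0,2\})$ while the paper targets $(\{1\},\{2\})$; these are equivalent via the identity $\A_X(\{0\}\cup I,\Fc)=\A_X(I,\Fc_\eta)$ and the colimit $\Fc_\eta=\varinjlim_D\Fc(D)$ (the paper's Lemma~\ref{lemma-0}). For~(iii) your route differs: the paper does not project to $\P^2$ but instead identifies $\A_X(\{1\},\Fc)\cap\A_X(\{2\},\Fc)$ with $\varprojlim_Z H^0(Z,\Fc|_Z)$ over all proper closed subschemes $Z$ (Proposition~\ref{prop-explinters}) and then uses Serre vanishing for $\Fc(-n)$ to show this limit equals $H^0(X,\Fc)$. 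Your projection idea is plausible but would still need such a cohomological input on the target.

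The genuine gap is in~(ii). You write that ``the space of rational functions on $X$ with pole orders bounded by a fixed divisor is a finite-dimensional $k$-subspace of $k(X)$'', but you have just passed to an affine chart, and condition~(ii) does not assume $X$ projective. On an affine surface $H^0(X,\OO_X(D))$ contains $k[X]$ and is infinite-dimensional, so the pigeonhole you describe has no floor to stand on. The paper's substitute is genuinely different: reduce by Noether normalization to $X=\Ab^2$, and then prove the elementary but non-obvious Lemma~\ref{lemma-school}: if a formal power series $a\in k[[u,v]]$ becomes a polynomial modulo every $f\in k[u,v]$ with $f(0)=0$, and $k$ is uncountable, then $a$ is a polynomial. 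The argument restricts $a$ to the pencil of lines $u=\lambda v$ and observes that each homogeneous piece of $a$ gives a polynomial in $\lambda$; if infinitely many such pieces are nonzero, then each $\lambda\in k$ is a root of all but finitely many of them, contradicting uncountability. This is the missing idea, and Theorem~\ref{theor:contr} shows it cannot be replaced by anything that would survive over a countable field.
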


The proof of the theorem under condition~$(i)$ is simple and is given in Section~\ref{sect:i}. The proofs of the theorem under conditions~$(ii)$ and $(iii)$ are based on several auxiliary statements and are given in Sections~\ref{sect:ii} and~\ref{sect:iii}, respectively. Note that the proof of the theorem under condition~$(ii)$ uses only elementary facts from commutative algebra, while the proof of the theorem under condition~$(iii)$ uses Serre duality. In particular, for a projective regular surface over an uncountable field one has two different proofs.

\section{Condition~$(i)$ case}\label{sect:i}

Let $X$ and $\Fc$ be as in Theorem~\ref{theor-main}.

\begin{lemma}\label{lemma-red}
Suppose that for any affine open subset $U\subset X$, we have
$$
\A_{U}(I\cap J,\OO_{U})=\A_{U}(I,\OO_{U})\cap \A_{U_i}(J,\OO_{U})\,.
$$
Then we have
$$
\A_X(I\cap J,\Fc)=\A_X(I,\Fc)\cap \A_X(J,\Fc)\,.
$$
\end{lemma}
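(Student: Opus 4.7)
The idea is to promote the hypothesis from an equality of abelian groups on affine opens to an equality of subsheaves of $\underline{\A}_X(\{0,1,2\},\OO_X)$, then tensor with the flat sheaf $\Fc$, and finally take global sections. Flatness of $\Fc$ ensures that $\otimes\Fc$ preserves intersections, and left exactness of $H^0$ ensures the same for global sections.

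\textbf{Step 1: sheafification of the hypothesis.} The presheaf assignment $U\mapsto \A_U(I,\OO_U)$ is literally the flabby sheaf $\underline{\A}_X(I,\OO_X)$, and analogously for $J$, $I\cap J$, and $\{0,1,2\}$. The hypothesis is an equality of subgroups of $\underline{\A}_X(\{0,1,2\},\OO_X)(U)$ for every affine open $U$. Since affine opens form a basis of $X$, this gives an equality
$$
\underline{\A}_X(I\cap J,\OO_X)=\underline{\A}_X(I,\OO_X)\cap \underline{\A}_X(J,\OO_X)
$$
of subsheaves of $\underline{\A}_X(\{0,1,2\},\OO_X)$. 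In the degenerate case $I\cap J=\varnothing$ we use the convention $\underline{\A}_X(\varnothing,\OO_X)=\OO_X$, still a subsheaf of $\underline{\A}_X(\{0,1,2\},\OO_X)$ via the diagonal embeddings.

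\textbf{Step 2: tensor with $\Fc$.} By Corollary~\ref{cor-Denis}, we have canonical isomorphisms $\underline{\A}_X(I,\Fc)\cong \underline{\A}_X(I,\OO_X)\otimes_{\OO_X}\Fc$, and analogously for $J$, $I\cap J$, and $\{0,1,2\}$. Since $\Fc$ is flat over $\OO_X$, the functor $-\otimes_{\OO_X}\Fc$ is exact. The general fact that a flat tensor product preserves intersection of subsheaves follows from the exact sequence
$$
0\to \Gc_1\cap\Gc_2\to \Gc_1\to \Gc/\Gc_2,
$$
which after $\otimes\Fc$ remains exact and exhibits $(\Gc_1\cap\Gc_2)\otimes\Fc$ as the kernel of $\Gc_1\otimes\Fc\to (\Gc/\Gc_2)\otimes\Fc$; combined with the injection $\Gc_2\otimes\Fc\hookrightarrow \Gc\otimes\Fc$ induced by flatness, this kernel is exactly $(\Gc_1\otimes\Fc)\cap(\Gc_2\otimes\Fc)$ inside $\Gc\otimes\Fc$. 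Applying this to the equality of Step~1 gives
$$
\underline{\A}_X(I\cap J,\Fc)=\underline{\A}_X(I,\Fc)\cap \underline{\A}_X(J,\Fc)
$$
as subsheaves of $\underline{\A}_X(\{0,1,2\},\Fc)$.

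\textbf{Step 3: take global sections.} The intersection of two subsheaves is a kernel, and $H^0(X,-)$ is left exact, so $H^0$ commutes with intersections of subsheaves of a fixed ambient sheaf. Applying $H^0(X,-)$ to the equality in Step~2 and using that $H^0(X,\underline{\A}_X(I,\Fc))=\A_X(I,\Fc)$ by definition (and $H^0(X,\Fc)=\A_X(\varnothing,\Fc)$ in the degenerate case) yields exactly the conclusion of the lemma.

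\textbf{Main obstacle.} The only nontrivial ingredient is the stability of intersections under the flat tensor product in Step~2, but this is a standard consequence of flatness together with Corollary~\ref{cor-Denis}. Steps~1 and~3 are purely formal; the essence of the reduction is that the ``hard'' question about $\Fc$ on $X$ reduces to the ``easy'' question about $\OO_U$ on affine opens $U$, with flatness of $\Fc$ as the only bridge needed.
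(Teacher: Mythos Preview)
Your proof is correct and follows exactly the approach of the paper, whose proof reads in full: ``Combine flatness of $\Fc$, Corollary~\ref{cor-Denis}, and left exactness of $H^0$.'' Your three steps are precisely a detailed unpacking of these three ingredients in the same order.
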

\begin{proof}
Combine flatness of $\Fc$, Corollary~\ref{cor-Denis}, and left exactness of $H^0$.
\end{proof}

\begin{proof}[Proof of Theorem~\ref{theor-main}(i)]
Suppose that condition~$(i)$ is satisfied. By Lemma~\ref{lemma-red}, it is enough to consider the case $\Fc=\OO_X$. Also, assume that $0\in I$ (otherwise, there is nothing to prove). It follows from the explicit definition of adelic groups on a regular surface,~\cite[Sec. 2]{Par},~\cite[Sec. 3.3]{Osi2},~\cite[Sec. 8.5]{Mor}, that an element $a\in \A_X(I,\OO_X)$ belongs to $\A_X(I\smallsetminus\{0\},\OO_X)$ if and only if for any flag
$\Delta=(\eta_X,\eta_1,\ldots,\eta_p)$ in $M(I)$, we have that \mbox{$a_{\Delta}\in \A_X(\Delta\smallsetminus\eta_X,\OO_X)$}, where $\eta_X$ is the generic point of $X$ (if $I=\{0\}$, then the last condition should be replaced by $a\in \OO_{X,x}$ for any point $x\in X$).

Since $X$ is a regular surface, the ring $A:=\A_X(\Delta\smallsetminus\eta_X,\OO_X)$ is also regular (this follows from the explicit description of local factors in adeles on a regular surface,~\mbox{\cite[Sec. 3.3]{Osi2}}. Therefore, $a_{\Delta}\in A$ if and only if $v_D(a)\geqslant 0$ for any prime divisor~$D$ in $\Spec(A)$,~\mbox{\cite[Cor. 11.4]{Eis}}. By the definition of adeles, $v_D(a)\geqslant 0$ for all $D$ that are not analytic components of a prime divisor on $X$.
This implies that $a_{\Delta}\in A$ for all $\Delta\in M(I)$ if and only if $a\in \A_X(\{1,2\},\OO_X)$.
In other words, we have shown the equality
$$
\A_X(I\smallsetminus\{0\},\OO_X)=\A_X(I,\OO_X)\cap \A_X(\{1,2\},\OO_X)\,.
$$
Since $I\smallsetminus\{0\}=I\cap J=I\cap \{1,2\}$, the proof is finished.
\end{proof}

\section{Condition~$(ii)$ case}\label{sect:ii}

Next two lemmas are also used in Section~\ref{sect:iii}. Let $X$ and $\Fc$ be as in Theorem~\ref{theor-main}.

\begin{lemma}\label{lemma-0}
Suppose that $0\notin I\cup J$, $I,J\subset\{0,1,2\}$, and that for any flat quasicoherent sheaf $\Gc$ on $X$, we have
$$
\A_X(I\cap J,\Gc)=\A_X(I,\Gc)\cap \A_X(J,\Gc)\,.
$$
Then we have
$$
\A_X(\{0\}\cup(I\cap J),\Fc)=\A_X(\{0\}\cup I,\Fc)\cap \A_X(\{0\}\cup J,\Fc)\,.
$$
\end{lemma}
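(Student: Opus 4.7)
My plan is to reduce the statement to the hypothesis of the lemma applied to a suitably ``rationalized'' flat sheaf. Let $K$ denote the constant sheaf on $X$ with value the function field $K(X)$, equivalently $K = j_* \OO_{\eta_X}$ for the inclusion $j\colon \eta_X \hookrightarrow X$ of the generic point. Set $\Gc := K \otimes_{\OO_X} \Fc$. Since $K$ is a filtered colimit of localizations of $\OO_X$, it is $\OO_X$-flat; together with flatness of $\Fc$, this makes $\Gc$ again a flat quasicoherent sheaf on $X$.

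The key step is to establish, for every $L \subset \{1,2\}$, a canonical identification
$$
\A_X(\{0\} \cup L, \Fc) \;=\; \A_X(L, \Gc)
$$
as subgroups of the ambient group $\A_X(\{0,1,2\},\Fc) = \A_X(\{0,1,2\},\Gc)$. The underlying reason is that every flag in $S(\{0\} \cup L)$ begins at $\eta_X$, so its local factor for $\Fc$ depends on $\Fc$ only through the generic stalk $\Fc_{\eta_X} = \Gc_{\eta_X}$; and for the already generic-constant sheaf $\Gc$, including or removing $\eta_X$ at the head of a flag does not change the local factor. Concretely, I would combine Lemma~\ref{lemma-red} to reduce to affine opens with $\Fc = \OO_X$, then invoke Lemma~\ref{lemma-Denis} to convert the identification into $\A_U(\{0\}\cup L,\OO_U) = \A_U(L,\OO_U) \otimes_{\OO_U(U)} K$ on each affine $U$, and finally verify this equality by inspection against the explicit description of surface adeles in~\cite{Par} and~\cite[Sec. 3.3]{Osi2}; the point being that tensoring the unrestricted product of complete local rings along codimension $\geqslant 1$ points with $K$ recovers precisely the restricted product of their fraction fields that appears when the generic point is prepended to the flag.

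Granting this identification, applying the hypothesis of the lemma to the flat sheaf $\Gc$ yields
$$
\A_X(I \cap J, \Gc) \;=\; \A_X(I, \Gc) \cap \A_X(J, \Gc),
$$
and rewriting each term via the identification (noting that $I \cap J \subset \{1,2\}$ as well) produces exactly
$$
\A_X\bigl(\{0\} \cup (I \cap J), \Fc\bigr) \;=\; \A_X(\{0\} \cup I, \Fc) \cap \A_X(\{0\} \cup J, \Fc),
$$
which is the desired equality.

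The main obstacle is the careful verification of the identification $\A_X(\{0\} \cup L, \Fc) = \A_X(L, \Gc)$, particularly its compatibility with the embeddings into the ambient adelic group so that the intersection operation is faithfully preserved under the identification. Once this point is settled, the remainder of the argument is purely formal.
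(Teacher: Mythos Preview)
Your approach coincides with the paper's at the essential point: both rest on the identification $\A_X(\{0\}\cup L,\Fc)=\A_X(L,\Fc_\eta)$ for $L\subset\{1,2\}$, where $\Fc_\eta$ (your $\Gc$) is the constant sheaf on the generic fibre. The paper asserts this identification in one line ``by the definition of adeles'' rather than routing through Lemmas~\ref{lemma-red} and~\ref{lemma-Denis} and an explicit local check as you propose. The only substantive difference is that you then apply the hypothesis directly to $\Gc=\Fc_\eta$, whereas the paper writes $\Fc_\eta=\varinjlim_D\Fc(D)$ over effective divisors $D$ and applies the hypothesis to each twist $\Fc(D)$, using that the adelic functor commutes with filtered colimits (and that filtered colimits of injections commute with intersection). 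Your shortcut is perfectly valid since $\Fc_\eta$ is flat quasicoherent; the paper's detour is what makes Remark~\ref{rmk-finite} work, since when $\Fc$ is locally free of finite rank so are all the $\Fc(D)$, while $\Fc_\eta$ is never coherent.
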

\begin{proof}
Let $\Fc_{\eta}$ denote the constant sheaf associated to the fibre of $\Fc$ at the generic point of $X$. By the definition of adeles, there is an equality (for which one does not require $\Fc$ to be coherent)
$$
\A_X(\{0\}\cup I,\Fc)=\A_X(I,\Fc_{\eta})\,.
$$
We have $\Fc_{\eta}=\varinjlim_D\Fc(D)$, where the limit is taken over all effective (not necessarily reduced) divisors $D$ on $X$. Since the functor $\A_X(I,-)$ is exact and commutes with filtered colimits, we obtain the equality
$$
\A_X(\{0\}\cup I,\Fc)=\varinjlim_D \A_X(I,\Fc(D))\,.
$$
This implies the required statement after we apply the condition of the lemma to sheaves~\mbox{$\Gc=\Fc(D)$} for various $D$.
\end{proof}

\begin{lemma}\label{lemma-reduction}
Suppose that for any flat quasicoherent sheaf $\Gc$ on $X$, we have
$$
H^0(X,\Gc)=\A_X(\{1\},\Gc)\cap\A_X(\{2\},\Gc)\,.
$$
Then for all $I,J\subset \{0,1,2\}$, we have
$$
\A_X(I\cap J,\Fc)=\A_X(I,\Fc)\cap \A_X(J,\Fc)\,.
$$
\end{lemma}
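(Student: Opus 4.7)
The plan is to verify $\A_X(I\cap J,\Fc)=\A_X(I,\Fc)\cap\A_X(J,\Fc)$ case by case for all pairs $I,J\subset\{0,1,2\}$, the intersections being taken inside $\A_X(\{0,1,2\},\Fc)$ (which makes sense by Proposition~\ref{prop:inj}). The equality is trivial when $I\subset J$ or $J\subset I$. Moreover, Theorem~\ref{theor-main}(i), already proved in Section~\ref{sect:i}, disposes of every pair satisfying $I\cap J=I\smallsetminus\{0\}$ up to swapping $I$ and $J$; this covers, in particular, all non-trivial pairs in which one of $I,J$ equals $\{0\}$, as well as the pairs $(\{0,1\},\{1,2\})$ and $(\{0,2\},\{1,2\})$.

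Only four cases then remain, namely $(\{1\},\{2\})$, $(\{0,1\},\{0,2\})$, $(\{0,1\},\{2\})$, and $(\{1\},\{0,2\})$. The first is precisely the assumption applied with $\Gc=\Fc$. For the second, I would invoke Lemma~\ref{lemma-0} with input pair $\{1\},\{2\}$: its hypothesis (for every flat quasicoherent sheaf) is exactly the hypothesis of the present lemma, so in particular it holds for each twist $\Fc(D)$ appearing in the proof of Lemma~\ref{lemma-0}, and its conclusion yields $\A_X(\{0\},\Fc)=\A_X(\{0,1\},\Fc)\cap\A_X(\{0,2\},\Fc)$.

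The last two cases follow from a short inclusion argument using the one just settled. Indeed, any element of $\A_X(\{0,1\},\Fc)\cap\A_X(\{2\},\Fc)$ lies a fortiori in $\A_X(\{0,1\},\Fc)\cap\A_X(\{0,2\},\Fc)=\A_X(\{0\},\Fc)$, hence in $\A_X(\{0\},\Fc)\cap\A_X(\{2\},\Fc)$, which equals $H^0(X,\Fc)$ by Theorem~\ref{theor-main}(i); a symmetric argument handles $(\{1\},\{0,2\})$. No single step looks substantial: the real work is the bookkeeping, and the main obstacle—if any—is ensuring that each remaining pair collapses either to the hypothesis, to the output of Lemma~\ref{lemma-0}, or to condition~(i) via an elementary containment.
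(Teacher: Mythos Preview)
Your proof is correct and follows essentially the same approach as the paper: both isolate the same four residual pairs and handle $(\{1\},\{2\})$ by hypothesis and $(\{0,1\},\{0,2\})$ via Lemma~\ref{lemma-0}. The only minor variation is in the two mixed cases: the paper reduces $(\{0,1\},\{2\})$ to $(\{1\},\{2\})$ by intersecting with $\A_X(\{1,2\},\Fc)$ and invoking Theorem~\ref{theor-main}(i) for $(\{0,1\},\{1,2\})$, whereas you instead pass through the already-settled $(\{0,1\},\{0,2\})$ case to land in $\A_X(\{0\},\Fc)$ and then apply Theorem~\ref{theor-main}(i) to $(\{0\},\{2\})$; both routes are equally valid and use the same ingredients in a slightly different order.
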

\begin{proof}
By Theorem~\ref{theor-main}$(i)$, we need to consider only pairs $(I,J)$
such that $I\cap J\ne I\smallsetminus\{0\}$.
Explicitly, it is enough to consider the following pairs $(I,J)$ (up to a permutation of~$I$ and $J$):
$$
(\{1\},\{2\}),\quad (\{0,1\},\{2\}),\quad (\{1\},\{0,2\}),\quad (\{0,1\},\{0,2\})\,.
$$
The second and the third cases are reduced to the first one as follows: one has the embedding
$$
\A_X(\{0,1\},\Fc)\cap \A_X(\{2\},\Fc)\subset \A_X(\{0,1\},\Fc)\cap \A_X(\{1,2\},\Fc)=\A_X(\{1\},\Fc)\,,
$$
where the last equality follows from Theorem~\ref{theor-main}$(i)$. Therefore, we see that
$$
\A_X(\{0,1\},\Fc)\cap \A_X(\{2\},\Fc)=\A_X(\{1\},\Fc)\cap \A_X(\{2\},\Fc)\,.
$$
The same reasoning is for the pair $(\{1\},\{02\})$.
The fourth case is reduced to the first case by Lemma~\ref{lemma-0}.
\end{proof}

\begin{rmk}\label{rmk-finite}
It follows from the proofs that Lemmas~\ref{lemma-0} and~\ref{lemma-reduction} remain true after one replaces flat quasicoherent sheaves by locally free sheaves of finite rank.
\end{rmk}

The next lemma is the only place where we use that $k$ is uncountable.

\begin{lemma}\label{lemma-school}
Let $a\in k[[u,v]]$ be a Taylor series over a field $k$. Suppose that for any polynomial \mbox{$f\in k[u,v]$} with $f(0)=0$, there is a polynomial $b_f\in k[u,v]$ such that $a\equiv b_f\pmod{f}$. Also, suppose that $k$ is uncountable. Then the series $a$ is actually a polynomial: $a\in k[u,v]$.
\end{lemma}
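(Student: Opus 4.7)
The plan is to test the hypothesis only against the very simple family of polynomials $f_\lambda = v - \lambda u$ for $\lambda \in k$, each of which satisfies $f_\lambda(0)=0$. Reducing modulo $f_\lambda$ amounts to substituting $v = \lambda u$, so $k[[u,v]]/(f_\lambda) \cong k[[u]]$, and in this quotient $b_{f_\lambda}$ becomes a polynomial in $u$. Writing $a = \sum_{i,j\geqslant 0} c_{ij} u^i v^j$, one obtains
$$
a(u,\lambda u) \;=\; \sum_{n\geqslant 0} Q_n(\lambda)\, u^n, \qquad
Q_n(\lambda) := \sum_{i+j=n} c_{ij}\,\lambda^j \;\in\; k[\lambda].
$$
The hypothesis then says that for every $\lambda\in k$ there exists an integer $N(\lambda)$ such that $Q_n(\lambda)=0$ for all $n\geqslant N(\lambda)$.

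The next step uses uncountability of $k$. For each $N\geqslant 0$ set
$$
E_N \;:=\; \{\lambda\in k : Q_n(\lambda)=0 \text{ for all } n\geqslant N\}.
$$
Then $k = \bigcup_{N\geqslant 0} E_N$, and since $k$ is uncountable while the union is countable, some $E_{N_0}$ must be uncountable, in particular infinite. For each fixed $n\geqslant N_0$ the polynomial $Q_n(\lambda)\in k[\lambda]$ has degree at most $n$ and vanishes on the infinite set $E_{N_0}$, so $Q_n\equiv 0$ in $k[\lambda]$.

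Finally, the identity $Q_n\equiv 0$ means that every coefficient $c_{n-j,j}$ with $j=0,\ldots,n$ vanishes, so $c_{ij}=0$ for all $(i,j)$ with $i+j\geqslant N_0$. Hence $a$ is a polynomial of total degree at most $N_0-1$, which is the desired conclusion.

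The only delicate point is the passage from the pointwise condition ``$N$ depends on $\lambda$'' to a single uniform $N_0$ that works for infinitely many $\lambda$; this is precisely where the uncountability of $k$ is essential, via the pigeonhole-type covering argument above. Everything else is just the observation that a nonzero one-variable polynomial of bounded degree has only finitely many roots.
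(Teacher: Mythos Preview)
Your proof is correct and follows essentially the same route as the paper: restrict $a$ to the pencil of lines through the origin, package the resulting one-variable coefficients as polynomials in the slope parameter, and use uncountability of $k$ to force all but finitely many of them to vanish identically. The only cosmetic differences are that the paper uses $f = u - \lambda v$ rather than $v - \lambda u$, and phrases the uncountability step as a contradiction (a countable family of nonzero polynomials has only countably many roots in total) rather than your pigeonhole covering $k = \bigcup_N E_N$.
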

\begin{proof}
Let $a=\sum_{i,j\geqslant 0}a_{i,j}u^iv^j$. For each $\lambda\in k$, consider a linear polynomial $f=u-\lambda v$. Then $k[[u,v]]/(f)\simeq k[[v]]$ and $a\pmod{f}$ is the following Taylor series:
$$
\sum_{n\geqslant 0}\left(\sum_{i=0}^na_{i,n-i}\lambda^i\right)v^n.
$$
Assume that $a$ is not a polynomial. Equivalently, the sequence of polynomials
$$
p_n(t):=\sum_{i=0}^na_{i,n-i}t^i
$$
in a formal variable $t$ contains infinitely many non-zero elements. By the assumption of the lemma, for any element $\lambda$, there is $n_0$ such that for any $n\geqslant n_0$, we have $p_{n}(\lambda)=0$. This contradicts the fact that $k$ is uncountable, because each non-zero polynomial has finitely many roots and we have a countable set of polynomials $p_n$.
\end{proof}

\begin{proof}[Proof of Theorem~\ref{theor-main}(ii)]
Suppose that condition~$(ii)$ is satisfied. By Lemma~\ref{lemma-reduction}, it is enough to show the equality $H^0(X,\Fc)=\A_X(\{1\},\Fc)\cap\A_X(\{2\},\Fc)$. By Lemma~\ref{lemma-red}, we may assume that $X$ is affine and $\Fc=\OO_X$. By Noether normalization, there is a finite morphism $\pi\colon X\to \Ab^2$. By Lemma~\ref{prop-finitemorph}, the statement of the lemma for $(X,\OO_X)$ is equivalent to that for $(\Ab^2,\pi_*\OO_X)$.

Since $\Ab^2$ is regular and $X$ is equidimensional, $k[X]=(\pi_*\OO_X)(\Ab^2)$ is a projective module over $k[\Ab^2]$,~\cite[Cor. 18.17]{Eis}. Therefore, by Lemma~\ref{lemma-Denis}, we are reduced to the case $(\Ab^2,\OO_{\Ab^2})$.

For short, put $A_i:=\A_{\Ab^2}(\{i\},\OO_{\Ab^2})$, $i=1,2$. Let $a\in A_1\cap A_2$. For any irreducible curve $C\subset \Ab^2$, the restriction $a|_C:=(a_{C,x}|_C)_{x\in C}$ of $a\in \A_{\Ab^2}(\{1,2\},\OO_{\Ab^2})$ is an element in $\A_C(\{0,1\},\OO_C)$. Since $a\in A_1$, we see that $a|_C$ belongs to the field of rational functions~$k(C)$. On the other hand, we have $a\in A_2$, whence $a|_C$ is regular on~$C$ (cf. the proof of Proposition~\ref{prop-explinters}).

Let now $x\in\Ab^2$ be a $k$-point. Then the component $a_x\in \widehat{\OO}_{\Ab^2,x}$ of $a\in A_2$ satisfies the condition of Lemma~\ref{lemma-school}, whence $a_x\in \OO_{\Ab^2,x}$. Since $a\in A_1$, the rational function~$a_x$ is the same for all points $x\in X$, which finishes the proof.
\end{proof}

\section{Condition~$(iii)$ case}\label{sect:iii}

First we recall the constructions of adelic complexes and of an inverse image map on them. Given a Noetherian scheme $X$ and a quasicoherent sheaf $\Fc$ on $X$, the degree $p$ term of the corresponding adelic complex is defined by the formula,~\cite[Th. 2.4.1, Def. 5.1.1]{Hub},
$$
\A(X,\Fc)^p:=\A_X(S(X)_p,\Fc)\,.
$$
There are canonical isomorphisms,~\cite[Th. 4.2.3, Prop. 5.1.3]{Hub},
$$
H^p(X,\Fc)\cong H^p(\A(X,\Fc)^{\bullet}),\quad p\geqslant 0\,.
$$
In particular, the group of cocycles in $\A(X,\Fc)^{0}$ is equal to $H^0(X,\Fc)$.

Given a morphism of finite type $f\colon Y\to X$ between Noetherian schemes and a quasicoherent sheaf $\Fc$ on $X$, one has well-defined inverse image maps,~\cite[p.178]{Par2},
\begin{equation}\label{eq:invim}
f^*\colon \A_X(S(X)_p,\Fc)\to \A_Y(S(Y)_p,f^*\Fc),\quad p\geqslant 0\,.
\end{equation}
For any flag $\Delta\in S(Y)_p$ and $a\in \A_X(S(X)_p,\Fc)$, one has $(f^*a)_{\Delta}=f^*(a_{f(\Delta)})$, where we put~$a_{f(\Delta)}$ to be zero if $f(\Delta)$ is not a non-degenerate flag and we use otherwise the canonical map
$$
f^*\colon\A_X(\{f(\Delta)\},\Fc)\to\A_Y(\{\Delta\},f^*\Fc)\,.
$$
The maps~\eqref{eq:invim} define a morphism between the adelic complexes
$$
f^*\colon \A(X,\Fc)^{\bullet}\to \A(Y,f^*\Fc)^{\bullet}\,.
$$

\begin{prop}\label{prop-explinters}
If $\Fc$ is a locally free sheaf of finite rank on a regular surface $X$, then there is a canonical isomorphism
$$
\A_X(\{1\},\Fc)\cap \A_X(\{2\},\Fc)\cong \varprojlim_Z H^0(Z,\Fc|_Z)\,,
$$
where the projective limit is taken over all closed subschemes $Z\subset X$ (not necessarily reduced or irreducible) such that $Z\ne X$.
\end{prop}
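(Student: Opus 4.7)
The plan is to construct, for each proper closed subscheme $Z\subset X$ with closed immersion $i\colon Z\hookrightarrow X$, a canonical restriction map
$$
\rho_Z\colon A:=\A_X(\{1\},\Fc)\cap\A_X(\{2\},\Fc)\to H^0(Z,\Fc|_Z),
$$
compatible under inclusions $Z'\subset Z$, and then to show that the induced map $\rho\colon A\to\varprojlim_Z H^0(Z,\Fc|_Z)$ is bijective.

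To construct $\rho_Z$, I would combine the surjection $\Fc\twoheadrightarrow i_*(\Fc|_Z)$, exactness of the adele functor, and Lemma~\ref{prop-finitemorph} to obtain natural restriction maps $r\colon\A_X(S,\Fc)\to\A_Z(i^{-1}S,\Fc|_Z)$ for arbitrary $S\subset S(X)_p$. By Proposition~\ref{prop:inj}, any $a\in A$ is the image of a unique $a^{(1)}\in\A_X(\{1\},\Fc)$ and of a unique $a^{(2)}\in\A_X(\{2\},\Fc)$. For $Z$ of dimension one, $i^{-1}\{1\}$ consists of the generic points of the irreducible components of $Z$ (codimension zero in $Z$), and $i^{-1}\{2\}$ consists of the closed points of $Z$ (codimension one in $Z$); hence $r(a^{(1)})\in\A_Z(\{0\},\Fc|_Z)$ and $r(a^{(2)})\in\A_Z(\{1\},\Fc|_Z)$. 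The equality $\varphi_{\{1\},\{0,1,2\}}(a^{(1)})=\varphi_{\{2\},\{0,1,2\}}(a^{(2)})$ in $\A_X(\{0,1,2\},\Fc)$, after applying $r$, becomes exactly the vanishing of the degree-zero differential of the adelic complex of $\Fc|_Z$ on the sum $r(a^{(1)})+r(a^{(2)})\in\A(Z,\Fc|_Z)^0$, so I set $\rho_Z(a):=r(a^{(1)})+r(a^{(2)})\in H^0(Z,\Fc|_Z)$. The zero-dimensional case is the special case where $r(a^{(1)})=0$ and $\rho_Z(a)=r(a^{(2)})$. Compatibility for $Z'\subset Z$ is immediate from functoriality of $r$.

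For injectivity of $\rho$, if $\rho_Z(a)=0$ for every proper closed $Z$, then taking $Z=\Spec(\OO_{X,x}/\m_x^n)$ at each closed point $x$ forces $a_x\equiv 0\pmod{\m_x^n\widehat{\Fc}_x}$ for every $n$, hence $a_x=0$ in $\widehat{\Fc}_x$; and taking $Z=nC$ at each irreducible curve $C=\bar\eta$ forces $a_\eta=0$ in $\widehat{\Fc}_\eta$, so $a=0$. For surjectivity, given $(s_Z)\in\varprojlim_Z H^0(Z,\Fc|_Z)$, set $a_x:=\varprojlim_n (s_{Z_n})_x\in\widehat{\Fc}_x$ with $Z_n=\Spec(\OO_{X,x}/\m_x^n)$, and $a_\eta:=\varprojlim_n (s_{nC})_\eta\in\widehat{\Fc}_\eta$ with $C=\bar\eta$; the compatibilities of the family $(s_Z)$ over all inclusions then supply the boundary-matching conditions that place $a:=((a_\eta),(a_x))$ in $A$ and give $\rho(a)=(s_Z)$.

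The main obstacle will be the cocycle identification in the construction of $\rho_Z$: one must unwind the boundary maps through iterated completions at each length-$1$ flag $(\eta,x)$ on $Z$, corresponding to the length-$2$ flag $(\eta_X,\eta,x)$ on $X$, and verify that the restrictions of $\varphi_{\{1\},\{0,1,2\}}(a^{(1)})$ and $\varphi_{\{2\},\{0,1,2\}}(a^{(2)})$ produce the same element of $\A_Z(\{0,1\},\Fc|_Z)$ at that flag precisely when $a\in A$. The regularity of $X$ enters here, as in the proof of Theorem~\ref{theor-main}(ii), to ensure the relevant local rings and their completions behave well.
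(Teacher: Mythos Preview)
Your approach coincides with the paper's: both produce $s_Z$ by showing that the pair $(a^{(1)},a^{(2)})$ pulls back to a degree-zero cocycle in the adelic complex of $\Fc|_Z$, and both read off the inverse direction from the completions $\widehat{\Fc}_C$, $\widehat{\Fc}_x$. The one difference is precisely in the step you flag as the main obstacle. The paper uses the inverse image $i^*$ on the full adelic complex (from~\cite{Par2}), which is a morphism of complexes by construction; setting $\tilde a=(0,a,a)\in\A(X,\Fc)^0$, one has $d\tilde a\in\A_X(\{0,1\},\Fc)\oplus\A_X(\{0,2\},\Fc)$, and this is killed by $i^*$ on the nose since $\dim Z\leqslant 1$. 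This bypasses the flag-by-flag unwinding you anticipate. Note also that your $r$, applied literally to the equality in $\A_X(\{0,1,2\},\Fc)$, yields only $0=0$, because $i^{-1}$ of any length-two flag containing $\eta_X$ is empty; the correct move is to first descend the equality to $\A_X(\{1,2\},\Fc)$ via Proposition~\ref{prop:inj} and restrict from there, which is a small fix but worth making explicit.
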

\begin{proof}
Since $\Fc$ is coherent, there are equalities
$$
\A_X(\{1\},\Fc)=\prod_{C\subset X}\widehat{\Fc}_C,\quad \A_X(\{2\},\Fc)=\prod_{x\in X}\widehat{\Fc}_x\,,
$$
where $C$ runs though all irreducible curves on $X$, $x$ runs through all closed points on~$X$, and $\widehat{\Fc}_C$, $\widehat{\Fc}_x$ denote the completions of the stalks $\Fc_C$ and $\Fc_x$, respectively. Therefore, there is a well-defined map
$$
\varprojlim_Z H^0(Z,\Fc|_Z)\to \A_X(\{1\},\Fc)\cap \A_X(\{2\},\Fc)
$$
induced by taking limits over subschemes in $X$ whose support is either a curve $C$ or a point $x$. Let us construct an inverse map.

Given $a\in \A_X(\{1\},\Fc)\cap \A_X(\{2\},\Fc)$, consider the element
$$
\tilde a:=(0,a,a)\in \A_X(\{0\},\Fc)\oplus \A_X(\{1\},\Fc)\oplus \A_X(\{2\},\Fc)=\A(X,\Fc)^0\,.
$$
We have that
$$
d\tilde a\in \A_X(\{01\},\Fc)\oplus \A_X(\{02\},\Fc)\subset \A(X,\Fc)^1\,.
$$
Let $i\colon Z\to X$ be a closed subscheme such that $Z\ne X$.
Since $Z$ has dimension at most one, the inverse image map
$$
i^*\colon \A_X(S(X)_1,\Fc)\to \A_Z(S(Z)_1,\Fc|_Z)
$$
vanishes on both subspaces $\A_X(\{01\},\Fc)$ and $\A_X(\{02\},\Fc)$ in $\A_X(S(X)_1,\Fc)$. Therefore, $i^*(d\tilde a)=0$ and $i^*(\tilde a)$ is a cocycle in $\A(Z,\Fc|_Z)^0$. The isomorphism $H^0(Z,\Fc|_Z)\cong H^0(\A(Z,\Fc|_Z)^{\bullet})$ implies that $i^*(\tilde a)$ corresponds to a unique element $s_Z\in H^0(Z,\Fc|_Z)$. One checks that the collection $\{s_Z\}$ defines an element in the projective limit and this gives the desired map
$$
\A_X(\{1\},\Fc)\cap \A_X(\{2\},\Fc)\to \varprojlim_Z H^0(Z,\Fc|_Z)\,.
$$
This finishes the proof.
\end{proof}

\begin{proof}[Proof of Theorem~\ref{theor-main}(iii)]
Suppose that condition~$(iii)$ is satisfied. By Lemma~\ref{lemma-reduction}, it is enough to show the equality $H^0(X,\Fc)=\A_X(\{1\},\Fc)\cap\A_X(\{2\},\Fc)$. By Proposition~\ref{prop-explinters}, we need to show that the natural embedding
$$
H^0(X,\Fc)\to \varprojlim_Z H^0(Z,\Fc|_Z)
$$
is an isomorphism, where the limit is taken over all closed subschemes $Z\subset X$ (not necessarily reduced or irreducible) such that $Z\ne X$.

Since $X$ is projective, by Serre duality, there exists a very ample invertible sheaf~$\OO_X(1)$ on $X$ such that
$$
H^0(X,\Fc(-n))=H^1(X,\Fc(-n))=0
$$
for any natural number $n>0$.

Let $D$ and $E$ be the zero schemes of non-zero sections of~$\OO_X(m)$ and $\OO_X(n)$ for some natural numbers $m,n>0$. Exact sequences of cohomology groups imply that the restriction $H^0(X,\Fc)\to H^0(D,\Fc|_D)$ is an isomorphism and the restriction $H^0(E,\Fc|_E)\to H^0(D\cap E,\Fc|_{D\cap E})$ is injective, where we consider the schematic intersection $D\cap E$.

Let now $\{s_Z\}$ be an element in $\varprojlim_Z H^0(Z,\Fc|_Z)$. Fix $D$ as above and let $s\in H^0(X,\Fc)$ be the element that restricts to $s_D$ on $D$. Then for any $E$ as above, $s$ restricts to $s_E$ on~$E$, as both~$s$ and $s_E$ restrict to the same element in $H^0(D\cap E,\Fc|_{D\cap E})$.

We claim that $s$ restricts to $s_{\tilde x}$ for any closed subscheme $\tilde x$ in $X$ whose support is one closed point $x\in X$. Indeed, it is sufficient to consider the case $\tilde x=\Spec(\OO_{X,x}/\m_x^N)$, $N\geqslant 1$. There is $E$ as above that passes through $x$. Then $\tilde x$ is a closed subscheme in $N\cdot E$, which is the zero scheme of the corresponding section of $\OO_X(Nn)$. As shown above, $s$ restricts to $s_{N\cdot E}$ on $N\cdot E$, whence $s$ restricts to $s_{\tilde x}$ on $\tilde x$.

By Proposition~\ref{prop-explinters}, this implies that $\{s_Z\}$ is equal to the image of $s$ under the map $H^0(X,\Fc)\to \varprojlim_Z H^0(Z,\Fc|_Z)$.
\end{proof}

\section{Counterexample}

We provide a counterexample to Theorem~\ref{theor-main} when $X$ is affine $k$ is countable. With this aim we use the following construction.

Let $X$ be an affine surface over a field $k$. Suppose that a series $\sum_{n\geqslant 1}f_n$, $f_n\in k[X]$, converges in the complete local ring~$\widehat{\OO}_{X,\eta}$ for any schematic point $\eta\in X$ except for the generic point of $X$. Then this series defines an element $a\in\A_X(\{0,1,2\},\OO_X)$ such that for each flag \mbox{$\Delta=(X,C,x)$}, we have
$$
a_{\Delta}=\sum_{n\geqslant 1}f_n\in\widehat{\OO}_{X,C}\cap \widehat{\OO}_{X,x}\,.
$$
It follows that $a\in \A_X(\{1\},\OO_X)\cap \A_X(\{2\},\OO_X)$.

\begin{theor}\label{theor:contr}
Let $X$ be an affine regular surface over a countable field $k$. Then
$$
k[X]\subsetneq \A_X(\{1\},\OO_X)\cap \A_X(\{2\},\OO_X)\,.
$$
\end{theor}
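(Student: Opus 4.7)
The strategy is to build an element $a = \sum_{n\geq 1} f_n$ with $f_n \in k[X]$ satisfying the convergence condition described in the paragraph preceding the theorem, so that $a$ automatically lies in $\A_X(\{1\},\OO_X) \cap \A_X(\{2\},\OO_X)$, while a diagonal argument enabled by the countability of $k[X]$ ensures that $a$ does not coincide with any polynomial. Enumerate $k[X] = \{g_n\}_{n\geq 1}$ and enumerate all non-zero prime ideals of $k[X]$ as $\{\tilde\q_n\}_{n\geq 1}$; both enumerations are possible because $k[X]$ is a countable Noetherian ring.

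We build $f_n \in k[X]$ and closed points $p_n \in X$ inductively. At step $n$, we first choose $p_n$ to be a closed point distinct from $p_1, \ldots, p_{n-1}$ and lying outside the closed subsets $V(\tilde\q_i)$ for $i \leq n$; this is possible because the excluded set is a proper closed subset of $X$, and closed points are dense in every non-empty open subset by the Jacobson property of finite-type $k$-algebras. Let $\m_n := \m_{p_n}$. By the choice of $p_n$, none of the primes $\tilde\q_i$ for $i \leq n$ or $\m_{p_j}$ for $j < n$ is contained in $\m_n$, so after localising at $\m_n$ these primes become the unit ideal and
\[
I_n := \bigcap_{i=1}^n \tilde\q_i^n \cap \bigcap_{i=1}^n \m_{p_i}^n
\]
satisfies $I_n \cdot \OO_{X,p_n} = \m_n^n \OO_{X,p_n}$. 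In particular the image of $I_n$ in $\OO_{X,p_n}/\m_n^{n+1}$ is the non-zero $k$-vector space $\m_n^n/\m_n^{n+1}$ (non-zero because $X$ is regular of dimension $2$). We therefore choose $f_n \in I_n$ so that the partial sum $s_n := f_1 + \cdots + f_n$ satisfies $s_n \not\equiv g_n \pmod{\m_n^{n+1}}$: it suffices to pick $f_n \not\equiv g_n - s_{n-1} \pmod{\m_n^{n+1}}$, and a non-zero $k$-subspace always has more than one element.

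To verify convergence, any non-generic schematic point corresponds to some prime $\q = \tilde\q_{i_0}$, and for $k \geq i_0$ we have $f_k \in \tilde\q_{i_0}^k = \q^k$, so the series $\sum f_n$ converges in $\widehat{\OO}_{X,\q}$. To verify $a \notin k[X]$, fix $m$ and note that for every $k > m$ the prime $\m_{p_m}$ appears in the intersection defining $I_k$, so $f_k \in \m_m^k \subset \m_m^{m+1}$. Hence the tail $\sum_{k>m} f_k$ lies in $\m_m^{m+1}\widehat{\OO}_{X,p_m}$, which gives $a_{p_m} \equiv s_m \not\equiv g_m \pmod{\m_m^{m+1}}$ in $\widehat{\OO}_{X,p_m}$; thus $a \ne g_m$ as adeles. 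The main obstacle is precisely the one circumvented by the careful choice of $p_n$: the convergence constraints imposed at the earlier primes must not collapse the image of $I_n$ in $\OO_{X,p_n}/\m_n^{n+1}$ to zero, for otherwise no diagonalisation against $g_n$ would be available. Avoiding the previously used curves at $p_n$ ensures that only the regular maximal ideal $\m_n$ contributes to $I_n$ after localising at $p_n$.
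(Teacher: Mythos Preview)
Your argument is correct. Both your proof and the paper's construct an element $a=\sum_{n\geqslant 1} f_n$ with $f_n\in k[X]$ converging at every non-generic point, using the countability of the set of non-zero primes to guarantee convergence; the difference lies in how you certify $a\notin k[X]$. The paper fixes a \emph{single} closed point $x$ and a function $t$ vanishing there, chooses an exhaustive filtration of $k[X]$ by finite-dimensional subspaces $F_l$, and arranges the $f_n$ so that the partial sums are separated from their tails by the condition $F_l\cap (t^{a(l)})=0$ in $\widehat{\OO}_{X,x}$; the non-polynomiality is then deduced by a short contradiction at $x$. You instead run a direct diagonalisation: you enumerate $k[X]=\{g_n\}$ and, at stage $n$, pick a \emph{fresh} closed point $p_n$ away from all curves $V(\tilde\q_i)$ used so far, so that the accumulated convergence constraints localise to $\m_{p_n}^n$ only, leaving the non-zero space $\m_{p_n}^n/\m_{p_n}^{n+1}$ available to force $a\not\equiv g_n\pmod{\m_{p_n}^{n+1}}$. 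Your route avoids the auxiliary filtration and the separation estimate $F_l\cap(t^{a(l)})=0$, at the cost of introducing infinitely many test points and appealing to the Jacobson property to find them; the paper's route needs only one point but a slightly more delicate bookkeeping of growth. Both are elementary and of comparable length.
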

\begin{proof}
By the construction before the theorem, it is enough to find a convergent series $\sum_{n\geqslant 1}f_n$ as above, which does not converge to an element in $k[X]$.

There are countably many prime ideals in $k[X]$ (use Hilbert's basis theorem and countability of $k[X]$). Let $\p_1,\ldots,\p_n,\ldots$ be a sequence of all non-zero prime ideals in~$k[X]$ and take the sequence of ideals
$$
\a_1:=\p_1,\quad \a_n:=\a_{n-1}^2\cdot\p_n,\,n\geqslant 2\,.
$$
Choose non-zero elements $g_n\in \a_n$, $n\geqslant 1$. Choose a closed point $x\in X$ and a non-zero function $t\in k[X]$ which vanishes at $x$. Let $(t^a)$, $a\geqslant 1$, denote the ideal $t^a\cdot\widehat{\OO}_{X,x}$. Choose an increasing exhaustive filtration of~$k[X]$ by finite-dimensional $k$-subspaces:
$$
F_1\subset F_2\subset \ldots\subset F_l\subset F_{l+1}\subset \ldots\subset k[X]\,.
$$
Since all $F_l$ are finite-dimensional and $\cap_{a\geqslant 0}(t^a)=0$, we see that for any $l$, there is a natural number $a(l)$ such that
\begin{equation}\label{eq:intersfiltr}
F_l\cap (t^{a(l)})=0\,,
\end{equation}
where the intersection is taken in $\widehat{\OO}_{X,x}$. We can also assume that $a(l)$ is strictly increasing in $l$. Put $f_1:=g_1$. Now define recursively natural numbers $l(n)$, $n\geqslant 2$, and $f_n\in k[X]$ as follows:
$$
\sum_{i=1}^{n-1}f_i\in F_{l(n)},\quad f_n:=t^{a(l(n))}\cdot g_n,\quad n\geqslant 2\,,
$$
In particular, the sequence $l(n)$ is strictly increasing in $n$ and $f_n\in F_{l(n+1)}$. The series $\sum_{n\geqslant 1}f_n$ converges in the complete local ring $\widehat{\OO}_{X,\eta}$ of any schematic point $\eta\in X$ except for the generic point of $X$. Also, this series is rather rarefied.

Denote by $f$ the sum $\sum_{n\geqslant 1}f_n$ in $\widehat{\OO}_{X,x}$. Let us show that $f$ does not belong to $k[X]\subset \widehat{\OO}_{X,x}$. Assume the converse. Since $l(n)$ is strictly increasing, we have that $f\in F_{l(n)}$ for some $n$. Since $\sum_{i=1}^{n-1}f_i\in F_{l(n)}$ and, by construction, $\sum_{i\geqslant n}f_n\in (t^{a(l(n))})$, we conclude by condition~\eqref{eq:intersfiltr} that
$\sum_{i\geqslant n}f_n=0$. On the other hand, since $f_i\in (t^{a(l(n+1))})$ for $i>n$, we see that
$$
\sum\limits_{i\geqslant n}f_i\equiv f_n\pmod{t^{a(l(n+1))}}\,.
$$
Since $0\ne f_n\in F_{l(n+1)}$, we conclude by condition~\eqref{eq:intersfiltr} that $f_n$ does not vanish modulo $(t^{a(l(n+1))})$. Therefore, $\sum_{i\geqslant n}f_i\ne 0$ and we get a contradiction.
\end{proof}

The following proposition describes the intersection $\A_X(\{1\},\OO_X)\cap \A_X(\{2\},\OO_X)$ and has interest in its own right.

\begin{prop}\label{lemma-series}
Let $X$ be an affine surface over an arbitrary field $k$.
\begin{itemize}
\item[(i)]
There is a canonical isomorphism
$$
\A_X(\{1\},\OO_X)\cap \A_X(\{2\},\OO_X)\cong \varprojlim_{\a} k[X]/\a\,,
$$
where the projective limit is taken over all non-zero ideals $\a\subset k[X]$.
\item[(ii)]
Any element in $\A_X(\{1\},\OO_X)\cap \A_X(\{2\},\OO_X)$ is obtained from a series $\sum_{n\geqslant 1}f_n$, $f_n\in k[X]$, as in the construction before Theorem~\ref{theor:contr}.
\end{itemize}
\end{prop}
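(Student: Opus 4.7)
Set $A:=k[X]$. The underlying dictionary is that non-zero ideals $\a\subseteq A$ correspond bijectively to proper closed subschemes $Z_\a:=\Spec(A/\a)\subsetneq X$ (necessarily of dimension $\leq 1$), with $H^0(Z_\a,\OO_{Z_\a})=A/\a$; in this way the right-hand side of (i) is simply a reformulation of the projective limit appearing in the conclusion of Proposition~\ref{prop-explinters}, now over all proper closed subschemes rather than just those defined by divisorial data.

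For (i), I would imitate the argument of Proposition~\ref{prop-explinters} to build the forward map $\Phi$. Given $a\in\A_X(\{1\},\OO_X)\cap\A_X(\{2\},\OO_X)$, the zero-cochain $\tilde a:=(0,a,a)\in\A(X,\OO_X)^0$ has differential landing in $\A_X(\{01\},\OO_X)\oplus\A_X(\{02\},\OO_X)$; for every non-zero ideal $\a$, the inverse image map $i_\a^*\colon\A(X,\OO_X)^\bullet\to\A(Z_\a,\OO_{Z_\a})^\bullet$ annihilates both subspaces because $Z_\a$ contains no flag beginning at the generic point of $X$. Therefore $i_\a^*(\tilde a)$ is a $0$-cocycle and represents a unique element $s_\a\in H^0(Z_\a,\OO_{Z_\a})=A/\a$; functoriality of $i^*$ in $\a$ gives the compatibility $(s_\a)\in\varprojlim_\a A/\a$. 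The inverse $\Psi$ is built componentwise: from $(s_\a)$, the tower $A/\m_x^n=\OO_{X,x}/\m_x^n\OO_{X,x}$ yields $a_x\in\widehat{\OO}_{X,x}$ for each closed point $x$, while localizing the tower $A/\p_C^n$ at $\p_C$ and passing to the completion yields $a_C\in\widehat{\OO}_{X,C}$ for each irreducible curve $C$. Agreement in the flag completion $\widehat{\OO}_{X,(C,x)}$ is checked via the common quotient $A/(\p_C^n+\m_x^n)$, onto which both $A/\p_C^n$ and $A/\m_x^n$ surject and onto which $s_{\p_C^n}$ and $s_{\m_x^n}$ give the same image by compatibility of the family $(s_\a)$ applied to the ideal $\p_C^n\cap\m_x^n$. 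Checking $\Phi\Psi=\Id$ and $\Psi\Phi=\Id$ reduces to comparing local components on both sides.

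For (ii), given $a$ in the intersection and the corresponding family $(s_\a)$ from (i), I would pick a descending chain of non-zero ideals $\a_1\supseteq\a_2\supseteq\cdots$ cofinal among all non-zero ideals of $A$, lift each $s_{\a_n}$ to $\tilde s_n\in A$, and set $f_1:=\tilde s_1$, $f_n:=\tilde s_n-\tilde s_{n-1}$ for $n\geq 2$. Then $f_n\in\a_{n-1}$ by the compatibility of $(s_\a)$, and cofinality forces $f_n$ to eventually lie in any prescribed $\m_\eta^N$ or $\p_\eta^N$, so the series $\sum f_n$ converges in every $\widehat{\OO}_{X,\eta}$ with $\eta$ non-generic; the partial sum $\tilde s_N=\sum_{n\leq N}f_n$ realizes $s_{\a_N}$ modulo $\a_N$, hence has the correct limit at each such $\eta$.

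\textbf{Main obstacle.} The crux is the existence of a countable cofinal chain $\{\a_n\}$ of non-zero ideals. When $A$ is countable (in particular when $k$ is countable, the setting in which Theorem~\ref{theor:contr} produces genuine elements beyond $A$), one enumerates the non-zero ideals $\b_1,\b_2,\ldots$ by Hilbert's basis theorem and sets $\a_n:=\b_1\cap\cdots\cap\b_n$. For uncountable $k$ with $X$ regular, Theorem~\ref{theor-main}(ii) forces the intersection to coincide with $A$, so $a$ is already a polynomial and the series degenerates to a single term; in either regime (ii) holds.
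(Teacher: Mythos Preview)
Your proposal is correct and follows essentially the same route as the paper. For (i) the paper simply cites Proposition~\ref{prop-explinters} (closed subschemes $Z\subsetneq X$ of affine $X$ being exactly the $\Spec(k[X]/\a)$ for non-zero $\a$), while you spell out that argument; for (ii) both you and the paper split into the uncountable case (handled by Theorem~\ref{theor-main}(ii)) and the countable case, where one chooses a countable cofinal descending chain of non-zero ideals, lifts the compatible system along it, and telescopes. The only cosmetic difference is the choice of cofinal chain: you take $\a_n=\b_1\cap\cdots\cap\b_n$ over an enumeration of all non-zero ideals, whereas the paper reuses the sequence $\a_n=\a_{n-1}^2\cdot\p_n$ from the proof of Theorem~\ref{theor:contr}; both work for the same reason.
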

\begin{proof}
First, $(i)$ is a particular case of Proposition~\ref{prop-explinters}.

Let us prove~$(ii)$. If $k$ is uncountable, then by Theorem~\ref{theor-main}$(ii)$, there is nothing to prove. Assume that $k$ is countable. Let $\a_1,\ldots,\a_n,\ldots$ be the sequence of ideals in~$k[X]$ constructed in the proof of Theorem~\ref{theor:contr}. By~$(i)$, an element $a\in \A_X(\{1\},\OO_X)\cap \A_X(\{2\},\OO_X)$ defines a compatible collection of elements $g_n\in k[X]/\a_n$. Moreover, by the construction of $\a_n$, the collection~$\{g_n\}$ defines uniquely $a$. Let $h_n\in k[X]$, $n\geqslant 1$, be any lift of $g_n$ and put
$$
f_1:=h_1,\quad f_n:=h_n-h_{n-1},\,n\geqslant 2\,.
$$
Then the series $\sum_{n\geqslant 1}f_n$ converges in the complete local ring $\widehat{\OO}_{X,\eta}$ of any schematic point $\eta\in X$ except for the generic point and corresponds to $a$, because for any $m\geqslant 1$, we have that $f_{m+1}\in \a_m$ and $\sum_{n=1}^{m}f_n\equiv g_m\pmod{\a_m}$.
\end{proof}


\begin{thebibliography}{99}

\bibitem{Bei}
A.\,A.\,Beilinson, {\it Residues and adeles},
Funct. Anal. And Appl., {\bf 14} (1980), 34--35.

\bibitem{Eis}
D.\,Eisenbud, {\it Commutative algebra with a view toward algebraic geometry}, Graduate Texts in Mathematics, {\bf 150}, Springer-Verlag (1995).

\bibitem{FP}
T.\,Fimmel, A.\,N.\,Parshin, {\it An introduction to the higher adelic theory}, preprint

\bibitem{Hub}
A.\,Huber, {\it On the Parshin--Beilinson adeles
for schemes}, Abh. Math. Sem. Univ. Hamburg, {\bf 61}, (1991), 249--273.

\bibitem{M}
H.\,Matsumura, {\it Commutative algerbra}, 2nd ed., Benjamin and Cummings publ. company (1980).
\bibitem{Mor}
M.\,Morrow, {\it An introduction to higher dimensional local fields and ad\`eles}, preprint, arXiv:1204.0586.

\bibitem{Osi}
D.\,V.\,Osipov, {\it Krichever correspondence for algebraic varieties}, Izv. Math., {\bf 65}:5, (2001), 941--975.

\bibitem{Osi2}
D.\,V.\,Osipov, {\it $n$-dimensional local fields and adeles on $n$-dimensional schemes}, Surveys in contemporary mathematics, London Math. Soc. Lecture Note Ser., {\bf 347}, Cambridge Univ. Press, Cambridge, (2008), 131--164.

\bibitem{PO}
D.\,V.\,Osipov, A.\,N.\,Parshin, {\it Harmonic analisys and the Riemann--Roch theorem}, Dokl. Math., {\bf 84}:3 (2011), 826-–829.

\bibitem{Par}
A.\,N.\,Parshin, {\it On the arithmetic of
two-dimensional schemes I. Repartitions and residues}, {\it Izv. Akad. Nauk SSSR}, {\bf 40}:4 (1976), 736--773.

\bibitem{Par2}
A.\,N.\,Parshin, {\it Chern classes, adeles
and $L$-functions}, {\it J. Reine Angew. Math.}, {\bf 341}
(1983), 174--192.

\bibitem{Yek}
A.\,Yekutieli, {\it An Explicit Construction of the Grothendieck Residue Complex (with appendix by P. Sastry)}, Ast\'erisque, {\bf 208}, (1992).

\end{thebibliography}
\end{document}